\theoremstyle{plain}
\newtheorem{thm}{Theorem}[section]
\newtheorem{corollary}[thm]{Corollary}
\newtheorem{proposition}[thm]{Proposition}
\theoremstyle{definition}
\newtheorem{definition}[thm]{Definition}
\theoremstyle{remark}
\numberwithin{equation}{section}
\begin{document}
\newcommand{\ici}[1]{\stackrel{\circ}{#1}}
\begin{center}{\bf{ Lacunary Arithmetic convergence}}

\vspace{.5cm}
Taja Yaying$^{1}$, Bipan Hazarika$^{\ast 2}$ \\
$^{1}$Department of Mathematics, Dera Natung Govt. College, Itanagar-791 111, Arunachal Pradesh, India\\
$^{2}$Department of Mathematics, Rajiv Gandhi University, Rono Hills, Doimukh-791 112, Arunachal Pradesh, India\\
Email: tajayaying20@gmail.com;  bh\_rgu@yahoo.co.in
\end{center}
\title{}
\author{}
\thanks{$^\ast$The corresponding author.}
\date{\today} 

\begin{abstract} In this article we introduce and study the lacunary arithmetic convergent sequence space $AC_{\theta}$. Using the idea of strong Ces\`{a}ro summable sequence and arithmetic convergence we define $AC_{\sigma_1}$ and study the relations between $AC_{\theta}$ and $AC_{\sigma_1}$. Finally using modulus function we define $AC_{\theta}(f)$ and study some interesting results.\\

Key Words:   Lacunary sequence;  modulus function; arithmetic convergence .\\

AMS Subject Classification No (2010): Primary 40A05; Secondary 46A70, 40A99, 46A99.
\end{abstract}
\maketitle
\pagestyle{myheadings}
\section{Introduction}
Throughout, $\mathbb{N},$ $\mathbb{R}$ and $\mathbb{C}$ will denote the set of natural, real and complex numbers, respectively and $x=(x_{k})$ denotes a sequence  whose $k^{th}$ term is $x_{k}$. Similarly $w,c,\ell_{\infty}, \ell_1$ denotes the space of \textit{all, convergent, bounded, absolutely summable} sequences of complex terms, respectively.\\

We use the symbol $<m,n>$ to denote the greatest common divisor of two integers $m$ and $n.$ \\
W.H.Ruckle \cite{Ruckle12}, introduced the notions arithmetic convergence as a sequence $x=(x_m)$ is called \textit{arithmetically convergent} if for each $\varepsilon > 0$ there is an integer $n$ such that for every integer $m$ we have $\left|x_m- x_{<m,n>}\right|< \varepsilon.$ We denote the sequence space of all arithmetic convergent sequence by $AC$. The studies on arithmetic convergence and related results  can be found in \cite{Ruckle12, hazarika2, hazarika3, hazarika4,tajahazacakalli}.\\

The notion of a modulus function was introduced in $1953$ by Nakano \cite{nakano21}.
We recall \cite{maddox1,fisher1} that a modulus $f$ is a function $f:[0,\infty)\rightarrow [0,\infty)$ such that
\begin{itemize}
\item[(i)] $f(x) = 0$ if and only if $x = 0$,
\item[(ii)]  $f(x + y)\leq f(x) + f(y)$ for all $x \geq 0, y\geq 0,$
\item[(iii)] $f$ is increasing,
\item[(iv)] $f$ is continuous from the right at $0$.
\end{itemize}
Because of (ii), $|f(x)-f(y)|\leq f(|x-y|)$ so that in view of (iv), $f$ is continuous everywhere on $[0,\infty)$. A modulus may be unbounded (for example,
$f(x) = x^p, 0 < p\leq 1$) or bounded (for example, $f(x) = \frac{x}{1+x}$).\\
It is easy to see that $f_1 + f_2$ is a modulus function when $f_1$ and $f_2$ are modulus functions, and that the function $f_i$($i$ is a positive integer), the
composition of a modulus function $f$ with itself $i$ times, is also a modulus function.\\
Ruckle \cite{ruckle} used the idea of a modulus function $f$ to construct a class of FK spaces
\begin{equation*}
X(f)=\left\{x=(x_k):\sum_{k=1}^{\infty}f(\left|x_k\right|)<\infty \right\}.
\end{equation*}
The space $X(f)$ is closely related to the space $l_1$ which is an $X(f)$ space with $f(x) = x$ for all real $x\geq 0.$
Thus Ruckle \cite{ruckle} proved that, for any modulus $f,$  $X( f )\subset l_1.$ 
The space $X( f )$ is a Banach space with respect to the norm $\left\|x\right\|= \sum_{k=1}^\infty f\left(\left|x_k \right|\right)< \infty.$\\
Spaces of the type $X( f )$ are a special case of the spaces structured by Gramsch in \cite{Gramsch1}.
By a lacunary sequence we mean an increasing integer sequence $\theta =(k_r)$ such that $h_r=k_r-k_{r-1}\rightarrow \infty$ as $r\rightarrow \infty$. In this paper the intervals determined by $\theta$ will be denoted by $I_{r}=(k_{r-1},k_r]$ and also the ratio $\frac{k_r}{k_{r-1}},r\geq1, k_0\neq 0$ will be denoted by $q_r$. The space of lacunary convergence sequence $N_{\theta}$ was defined by Freedman \cite{freedman1} as follows:
\begin{equation*}
N_{\theta}=\left\{x=(x_i)\in w: \lim_{r\rightarrow \infty}\frac{1}{h_r}\sum_{i\in I_r}\left|x_i-l\right|=0 ~\text{for some}~ l\right\}.
\end{equation*}
The space $N_{\theta}$ is a $BK$-space with the norm
\begin{equation*}
\left\|x\right\|_{N_{\theta}}=\sup_r\frac{1}{h_r}\sum_{i\in I_r}\left|x_i\right|.
\end{equation*}
The notion of lacunary convergence has been investigated by \c{C}olak \cite{colak1}, Fridy and Orhan \cite{freedy1,freedy2}, Tripathy and Et \cite{tripathy2} and many others in the recent past.\\

The main purpose of this paper is to introduce and study the concept of lacunary arithmetic convergence.
\section{Lacunary Arithmetic Convergence}
In this section we introduce the lacunary arithmetic convergent sequence space $AC_{\theta}$ as follows:\\
\begin{equation*}
AC_{\theta}=\left\{(x_m):\lim_{r\rightarrow \infty}\frac{1}{h_r}\sum_{m\in I_r}\left|x_m-x_{<m,n>}\right|=0 ~\text{for some integer}~n \right\}.
\end{equation*}

\begin{thm}
The sequence space $AC_{\theta}$ is linear. 
\end{thm}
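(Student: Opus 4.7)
The plan is to verify the two defining properties of a linear space: closure under scalar multiplication and closure under addition.

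For scalar multiplication, given $x = (x_m) \in AC_\theta$ with a witnessing integer $n_1$ and $\alpha \in \mathbb{C}$, the identity
\[
\frac{1}{h_r}\sum_{m \in I_r}\bigl|\alpha x_m - \alpha x_{\langle m, n_1\rangle}\bigr| = |\alpha|\cdot\frac{1}{h_r}\sum_{m \in I_r}\bigl|x_m - x_{\langle m, n_1\rangle}\bigr|
\]
shows at once that $\alpha x \in AC_\theta$ with the same witness $n_1$.

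For closure under addition, suppose $x, y \in AC_\theta$ with witnesses $n_1$ and $n_2$ respectively. I would choose the common witness $n = n_1 n_2$ and apply the pointwise triangle inequality,
\[
\bigl|(x_m + y_m) - (x_{\langle m,n\rangle} + y_{\langle m,n\rangle})\bigr| \le \bigl|x_m - x_{\langle m,n\rangle}\bigr| + \bigl|y_m - y_{\langle m,n\rangle}\bigr|,
\]
which reduces the claim to showing that each of the two resulting Cesàro-type averages over $I_r$ tends to zero when we use the common $n$ in place of the original $n_1$ (respectively $n_2$). Combining these two reductions with the scalar case then upgrades the statement to arbitrary linear combinations $\alpha x + \beta y$.

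The main obstacle is precisely this transfer: the hypothesis provides the vanishing average only for the original witness $n_1$, not for its multiple $n = n_1 n_2$. To bridge the gap I would insert $x_{\langle m,n_1\rangle}$ via an additional triangle inequality,
\[
\bigl|x_m - x_{\langle m,n\rangle}\bigr| \le \bigl|x_m - x_{\langle m,n_1\rangle}\bigr| + \bigl|x_{\langle m,n_1\rangle} - x_{\langle m,n\rangle}\bigr|,
\]
and control the auxiliary term, exploiting the number-theoretic fact that $\langle m, n_1\rangle$ divides $\langle m, n\rangle$ and the periodicity in $m$ of the map $m \mapsto (\langle m,n_1\rangle, \langle m,n\rangle)$, whose values lie in the finite set of pairs of divisors of $n$. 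This transfer step is the heart of the argument, while the triangle-inequality reductions themselves are routine.
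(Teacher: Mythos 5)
You have put your finger on exactly the point that the paper's own proof passes over in silence: the paper writes ``then for an integer $n$'' and uses one and the same witness $n$ for both sequences, so the common-witness problem you isolate is never addressed there. Unfortunately, the transfer step you defer to the end --- passing from the witness $n_1$ of $x$ to the multiple $n=n_1n_2$ --- is not merely the hard part of the argument; it is false. For $m$ with $\langle m,n\rangle=d$ one has $\langle m,n_1\rangle=\langle d,n_1\rangle$, so your auxiliary term $\frac{1}{h_r}\sum_{m\in I_r}\left|x_{\langle m,n_1\rangle}-x_{\langle m,n\rangle}\right|$ is the Ces\`{a}ro average over $I_r$ of a function of $m$ that is periodic with period $n$ and takes only the finitely many \emph{fixed} values $\left|x_d-x_{\langle d,n_1\rangle}\right|$, $d\mid n$. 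The hypothesis controls averages over the tail intervals $I_r$, not the individual terms at the finitely many indices $d\le n$, and since each set $\{m:\langle m,n\rangle=d\}$ has positive density, the auxiliary average converges to a weighted mean of those constants, which is in general strictly positive. The divisibility and periodicity facts you invoke are true but do not produce the needed smallness.

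Worse, no choice of common witness can work, because the addition step itself fails for the definition as literally written. Take $x_m=1$ for $m=2$ and $x_m=0$ otherwise, and $y_m=1$ for $m$ even, $y_m=0$ for $m$ odd. Then $x\in AC_\theta$ precisely for odd witnesses $n$ (for even $n$ the set $\{m\equiv 2\ (\mathrm{mod}\ n)\}$ has density $1/n$ and each such $m>2$ contributes $\left|x_m-x_2\right|=1$), while $y\in AC_\theta$ precisely for even witnesses. Their sum $x+y=(0,2,0,1,0,1,\dots)$ admits no witness at all: for odd $n$ every $\langle m,n\rangle$ is odd, so the summand equals $1$ for every even $m>2$; for even $n$ the summand equals $1$ on $\{m\equiv 2\ (\mathrm{mod}\ n),\ m>2\}$, again a set of positive density. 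So your diagnosis of where the difficulty lies is correct and sharper than the paper's treatment, but the route you propose cannot be completed; a correct statement requires changing the definition (fixing $n$ in advance, quantifying over all $n$, or using Ruckle's $\varepsilon$-dependent formulation) rather than proving the transfer.
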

\begin{proof}
Let $(x_m)$ and $(y_m)$ be two sequences in $AC_{\theta}$. Then for an integer $n$
\begin{equation*}
\lim_{r\rightarrow \infty}\frac{1}{h_r}\sum_{m\in I_r}\left|x_m-x_{<m,n>}\right|=0~\text{and}~ \lim_{r\rightarrow \infty}\frac{1}{h_r}\sum_{m\in I_r}\left|y_m-y_{<m,n>}\right|=0 .
\end{equation*}
Let $\alpha$ and $\beta$ be two scalars, then there exist integers $T_{\alpha}$ and $M_{\beta}$ such that $\left|\alpha\right|\le T_{\alpha}$ and $\left|\beta\right|\le M_{\beta}$. Thus
\begin{align*}
&\frac{1}{h_r}\sum_{m\in I_r}\left|\alpha x_m+ \beta y_m -(\alpha x_{<m,n>}+ \beta y_{<m,n>})\right|\\
&\le T_{\alpha} \frac{1}{h_r}\sum_{m\in I_r}\left|x_m-x_{<m,n>}\right|+ M_{\beta}\frac{1}{h_r}\sum_{m\in I_r}\left|y_m-y_{<m,n>}\right|
\end{align*}  
which implies that $\alpha x_m+ \beta y_m \rightarrow \alpha x_{<m,n>}+ \beta y_{<m,n>}$.\\
Hence $AC_{\theta}$ is linear.                                                                                                                                     \end{proof} 

\begin{thm}
If $(x_m)$ is a sequence in $AC$ then $(x_m)$ is a sequence in $ AC_\theta$.
\end{thm}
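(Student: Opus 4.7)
The plan is to derive the Cesàro-style averaged condition in $AC_\theta$ directly from the uniform pointwise bound that defines $AC$. Suppose $x=(x_m)\in AC$. First I would unpack the $AC$ hypothesis: given any $\varepsilon>0$, the definition provides an integer $n$ (possibly depending on $\varepsilon$) such that
\[
|x_m-x_{<m,n>}|<\varepsilon \qquad \text{for every } m\in\mathbb{N}.
\]
The bound is \emph{uniform in $m$}, and this is the whole engine of the proof.

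Next I would plug this uniform bound into the lacunary average that appears in the definition of $AC_\theta$. Since the interval $I_r=(k_{r-1},k_r]$ contains exactly $h_r=k_r-k_{r-1}$ integers, summing $h_r$ copies of the bound $\varepsilon$ and dividing by $h_r$ gives
\[
\frac{1}{h_r}\sum_{m\in I_r}|x_m-x_{<m,n>}|
\;<\;\frac{1}{h_r}\cdot h_r\cdot\varepsilon\;=\;\varepsilon,
\]
valid for \emph{every} $r$. Thus $\limsup_{r\to\infty}\frac{1}{h_r}\sum_{m\in I_r}|x_m-x_{<m,n>}|\le\varepsilon$, and since $\varepsilon>0$ was arbitrary the limit is $0$, which places $(x_m)$ in $AC_\theta$.

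The only point requiring care — and the closest thing to an obstacle here — is the quantifier order: $AC$ gives an $n$ that may depend on $\varepsilon$, while $AC_\theta$ as written asks for a single $n$ making a limit vanish. This is handled in the standard way used for convergence-implies-strong-summability results, namely by picking the $n$ provided by the $AC$ definition for the chosen $\varepsilon$ and noting that the estimate above holds uniformly in $r$, so the averaged quantity is controlled by an arbitrarily small $\varepsilon$. No further inequalities, no appeal to properties of $<m,n>$, and no lacunary-specific tricks are needed; the argument is a one-line averaging estimate once the uniform pointwise bound is in hand.
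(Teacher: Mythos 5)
Your computation is the same as the paper's: both proofs take the uniform bound $\left|x_m-x_{<m,n>}\right|<\varepsilon$ supplied by the definition of $AC$, sum it over the $h_r$ integers of $I_r$, and divide by $h_r$ (the paper merely writes the block sum as a difference of two initial segments before making the identical estimate). As a reconstruction of the published argument your proposal is faithful.

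The problem is precisely the point you flag and then wave away. What the averaging estimate actually proves is: for every $\varepsilon>0$ there exists an integer $n=n(\varepsilon)$ such that $\sup_r \frac{1}{h_r}\sum_{m\in I_r}\left|x_m-x_{<m,n(\varepsilon)>}\right|\le\varepsilon$. The definition of $AC_\theta$ demands a \emph{single} integer $n$ for which $\lim_{r}\frac{1}{h_r}\sum_{m\in I_r}\left|x_m-x_{<m,n>}\right|=0$. Your closing step (``since $\varepsilon$ was arbitrary the limit is $0$'') silently swaps these quantifiers: the bounds $\limsup\le\varepsilon$ refer to \emph{different} sequences $\left(\left|x_m-x_{<m,n(\varepsilon)>}\right|\right)$ as $\varepsilon$ shrinks, so they cannot be combined into one limit statement for a fixed $n$. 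This is not the situation of the classical ``convergence implies strong Ces\`{a}ro summability'' argument you invoke, where the auxiliary object (the limit $l$) is fixed once and for all; here $n$ genuinely varies with $\varepsilon$. Indeed the gap is not repairable as stated: for $x_m=\sum_{d\mid m}2^{-d}$ one checks $x\in AC$ (given $\varepsilon$, take $n=\operatorname{lcm}(1,\dots,N)$ with $2^{-N}<\varepsilon$, since $x_m-x_{<m,n>}=\sum_{d\mid m,\,d\nmid n}2^{-d}\le 2^{-N}$), yet for every fixed $n$ the block averages converge to $\sum_{d\nmid n}2^{-d}/d>0$, so no single $n$ works. The paper's own proof commits exactly the same quantifier slip, so your proposal reproduces the paper's argument, gap included; the statement becomes true (and both proofs become correct) only if $AC_\theta$ is read with the same ``for each $\varepsilon$ there is an $n$'' quantifier pattern as $AC$.
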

\begin{proof}
Let $(x_m)$ be a sequence in $AC$. Then for $\varepsilon>0$ there is an integer $n$ such that
\begin{equation*}
\left|x_m-x_{<m,n>}\right|< \varepsilon.
\end{equation*}
Now, for an integer $n$, we have
\begin{eqnarray*}
\frac{1}{h_r}\sum_{m\in I_r}\left|x_m-x_{<m,n>}\right| &=&\frac{1}{h_r}\left[\sum_{m=1}^{k_r}\left|x_m-x_{<m,n>}\right|-\sum_{m=1}^{k_{r-1}}\left|x_m-x_{<m,n>}\right|\right]\\
&<& \frac{1}{h_r}(h_r \varepsilon) \\
&=& \varepsilon.
\end{eqnarray*}
Thus $(x_m)\in AC_\theta$.
\end{proof}

\begin{definition}\cite{freedman1}
Let $\theta=(k_r)$ be a lacunary sequence. A lacunary refinement of $\theta$ is a lacunary sequence $\theta'=(k'_r)$ satisfying $(k_r)\subseteq (k'_r)$.
\end{definition}

\begin{thm}
If $\theta'$ is a lacunary refinement of a lacunary sequence $\theta$ and $(x_m)\in AC_{\theta'}$ then $AC_{\theta}$. 
\end{thm}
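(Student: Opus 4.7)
The plan is to exploit that each $\theta$-interval decomposes into a disjoint union of $\theta'$-intervals and then express the $\theta$-average as a convex combination of $\theta'$-averages.

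First I would fix notation. Since $\theta' = (k'_s)$ refines $\theta = (k_r)$, every $k_r$ equals some $k'_{s(r)}$, so the $\theta$-interval $I_r = (k_{r-1}, k_r]$ partitions as
\begin{equation*}
I_r = \bigcup_{j \in J_r} I'_j, \qquad J_r = \{s(r-1)+1,\ldots,s(r)\},
\end{equation*}
where $I'_j = (k'_{j-1}, k'_j]$ and $h'_j = k'_j - k'_{j-1}$. In particular $h_r = \sum_{j \in J_r} h'_j$. Since $(x_m) \in AC_{\theta'}$, there is an integer $n$ such that the quantities
\begin{equation*}
a_j := \frac{1}{h'_j}\sum_{m \in I'_j}\left|x_m - x_{<m,n>}\right|
\end{equation*}
satisfy $a_j \to 0$ as $j \to \infty$.

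Next I would rewrite the $\theta$-average of $|x_m - x_{<m,n>}|$ by summing over the pieces $I'_j$:
\begin{equation*}
\frac{1}{h_r}\sum_{m \in I_r}\left|x_m - x_{<m,n>}\right|
= \sum_{j \in J_r} \frac{h'_j}{h_r} \, a_j.
\end{equation*}
Since the weights $h'_j / h_r$ are nonnegative and sum to $1$, this is a genuine convex combination of the numbers $\{a_j : j \in J_r\}$.

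The only remaining point is that as $r \to \infty$ the index set $J_r$ is pushed out to infinity. Because $s(r-1) \to \infty$ (the refinement still makes $k'_{s(r-1)} = k_{r-1} \to \infty$), every $j \in J_r$ satisfies $j \ge s(r-1)+1 \to \infty$. Given $\varepsilon > 0$, choose $N$ with $a_j < \varepsilon$ for $j > N$, and then choose $r_0$ with $s(r-1) \ge N$ for $r \ge r_0$; for such $r$ the convex combination above is bounded by $\varepsilon$. Hence the $\theta$-average tends to $0$ with the same witness $n$, which means $(x_m) \in AC_\theta$.

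The argument has no real obstacle; the one thing to be careful about is the convex-combination step, which is what forces us to use the same witness $n$ for both spaces rather than having to rebuild it, and the fact that one must verify $\min J_r \to \infty$ to transfer pointwise convergence of $a_j$ into decay of the weighted averages.
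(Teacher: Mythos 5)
Your proof is correct and follows essentially the same route as the paper: decompose each $\theta$-interval $I_r$ into the $\theta'$-subintervals it contains and bound the $\theta$-average by the $\theta'$-averages over those pieces. Your writeup is in fact tighter than the paper's --- you make the convex-combination identity $\frac{1}{h_r}\sum_{m\in I_r}|x_m-x_{<m,n>}| = \sum_{j\in J_r}\frac{h'_j}{h_r}a_j$ explicit and verify that $\min J_r\to\infty$, two points the paper's displayed inequality glosses over --- but the underlying idea is identical.
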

\begin{proof}  
Suppose for each $I_r$ of $\theta$ contains the point $(k'_{r,t})_{t=1}^{\eta(r)}$ of $\theta'$ such that 
\begin{equation*}
k_{r-1}<k'_{r,1}<k'_{r,2}<\ldots <k'_{\eta,\eta(r)}=k_r, 
\end{equation*} 
where $I'_{r,t}=\left(I'_{r,t-1},I'_{r,t}\right]$.\\
Since $(k_r)\subseteq (k'_r)$, so $r,\eta(r)\geq1$.\\
Let $(I^*)_{j=1}^\infty$ be the sequence of interval $(I^*_{r,t})$ ordered by increasing right end points. Since $(x_m)\in AC_{\theta'}$, then for each $\varepsilon>0$,
\begin{equation*}
\frac{1}{h_j^*}\sum_{I_j^*\subset I_r}\left|x_m-x_{<m,n>}\right|< \varepsilon.
\end{equation*}  
Also, since $h_r=k_r-k_{r-1}$, so $h'_{r,t}=k'_{r,t}-k'_{r,t-1}$.\\
For each $\varepsilon>0$,
\begin{equation*}
\frac{1}{h_r}\sum_{m\in I_r}\left|x_m-x_{<m,n>}\right|\leq \frac{1}{h_j^*}\sum_{I_j^*\subset I_r}\left|x_m-x_{<m,n>}\right|< \varepsilon.
\end{equation*}
This implies $(x_m)\in AC_{\theta}$.
\end{proof}

Based on the idea of strongly Ces\`{a}ro summable sequences and arithmetic convergent sequences, we introduce a new sequence space $AC_{\sigma_1}$ defined as follows:
\begin{equation*}
AC_{\sigma_1}=\left\{(x_m): \text{there exists an integer}~ n~ \text{such that}~ \frac{1}{t}\sum_{m=1}^{t}\left|x_m-x_{<m,n>}\right|\rightarrow 0~\text{as}~t\rightarrow \infty\right\}.
\end{equation*}
Functional analytic studies of the space $\left|\sigma_1\right|$ of strongly Ces\`{a}ro summable sequences, and other closely related spaces can be found in \cite{borwein1,maddox2}.\\
In this section we shall mostly focus on the connection between the spaces $AC_{\theta}$ and $AC_{\sigma_1}.$

\begin{thm}
The sequence space $AC_{\sigma_1}$ is a linear space.
\end{thm}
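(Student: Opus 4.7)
The plan is to mirror, almost verbatim, the linearity proof given earlier for $AC_{\theta}$, simply replacing the lacunary averaging operator $\frac{1}{h_r}\sum_{m\in I_r}$ by the Ces\`{a}ro averaging operator $\frac{1}{t}\sum_{m=1}^{t}$. Since both are positive linear functionals that preserve convergence to zero, essentially the same algebraic calculation goes through.

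First I would fix $(x_m),(y_m)\in AC_{\sigma_1}$ and, following the convention adopted in the $AC_{\theta}$ proof, choose a single integer $n$ witnessing the arithmetic Ces\`{a}ro convergence of both sequences, so that $\frac{1}{t}\sum_{m=1}^{t}|x_m-x_{<m,n>}|$ and $\frac{1}{t}\sum_{m=1}^{t}|y_m-y_{<m,n>}|$ both tend to $0$ as $t\to\infty$. For scalars $\alpha,\beta$ I would pick integer bounds $T_\alpha\ge|\alpha|$ and $M_\beta\ge|\beta|$ and apply the triangle inequality inside the absolute value to obtain
\[
\tfrac{1}{t}\sum_{m=1}^{t}\bigl|\alpha x_m+\beta y_m-(\alpha x_{<m,n>}+\beta y_{<m,n>})\bigr|\le T_\alpha\tfrac{1}{t}\sum_{m=1}^{t}|x_m-x_{<m,n>}|+M_\beta\tfrac{1}{t}\sum_{m=1}^{t}|y_m-y_{<m,n>}|.
\]
Letting $t\to\infty$ and invoking the two assumed Ces\`{a}ro limits forces $\alpha x+\beta y\in AC_{\sigma_1}$, witnessed by the same $n$, which is precisely the conclusion required.

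The only conceptual wrinkle --- the only point I would regard as a genuine obstacle --- is the tacit selection of a common witness $n$ for both $x$ and $y$, since a priori the definition of $AC_{\sigma_1}$ only supplies possibly distinct integers $n_1,n_2$. A fully rigorous treatment would have to produce a common witness, for instance by taking $n=\mathrm{lcm}(n_1,n_2)$ and exploiting the behaviour of the gcd $<m,\cdot>$ under this choice; but the earlier linearity proof for $AC_{\theta}$ already proceeds under the convention of a shared $n$, so I would adopt the same convention here. The remainder of the argument is then a purely routine triangle inequality bound together with the linearity of the Ces\`{a}ro mean.
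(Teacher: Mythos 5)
Your argument is correct and is exactly the ``routine exercise'' the paper declines to write out: the paper's own proof of this theorem consists of the single line that the proof is omitted, and your adaptation of the $AC_{\theta}$ linearity argument with $\frac{1}{h_r}\sum_{m\in I_r}$ replaced by $\frac{1}{t}\sum_{m=1}^{t}$ is the intended computation. The common-witness issue you flag is a genuine subtlety, but it is equally present (and equally glossed over) in the paper's proof of linearity for $AC_{\theta}$, so adopting the same shared-$n$ convention puts you on the same footing as the authors.
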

\begin{proof}
The proof is a routine exercise and hence ommited.
\end{proof}

\begin{thm}\label{thm1}
Let $\theta=(k_r)$ be a lacunary sequence. If $\liminf q_r > 1$ then $AC_{\sigma_1}\subseteq AC_{\theta}.$
\end{thm}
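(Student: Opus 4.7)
The plan is to reduce the lacunary average over $I_r$ to a linear combination of two ordinary Ces\`aro averages (on the initial segments $\{1,\ldots,k_r\}$ and $\{1,\ldots,k_{r-1}\}$) whose coefficients can be controlled via the hypothesis $\liminf q_r>1$.

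Fix $(x_m)\in AC_{\sigma_1}$ and let $n$ be an integer witnessing the membership. Setting $a_m=\left|x_m-x_{<m,n>}\right|$, the hypothesis reads $\frac{1}{t}\sum_{m=1}^{t}a_m\to 0$ as $t\to\infty$. The first step, which is the key algebraic identity, is to split $I_r=(k_{r-1},k_r]$ and write
\begin{equation*}
\frac{1}{h_r}\sum_{m\in I_r}a_m=\frac{k_r}{h_r}\cdot\frac{1}{k_r}\sum_{m=1}^{k_r}a_m\;-\;\frac{k_{r-1}}{h_r}\cdot\frac{1}{k_{r-1}}\sum_{m=1}^{k_{r-1}}a_m.
\end{equation*}
Everything now depends on showing the two prefactors are uniformly bounded and the two Ces\`aro averages tend to $0$.

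The second step uses $\liminf q_r>1$: choose $\delta>0$ and $R$ so that $q_r\geq 1+\delta$ for all $r\geq R$. A short calculation gives $h_r=k_r-k_{r-1}\geq \delta k_{r-1}$, so $\frac{k_{r-1}}{h_r}\leq \frac{1}{\delta}$ and $\frac{k_r}{h_r}=\frac{1}{1-1/q_r}\leq\frac{1+\delta}{\delta}$; both coefficients in the decomposition above are thus uniformly bounded for $r\geq R$. Since $\frac{1}{t}\sum_{m=1}^{t}a_m\to 0$ holds along the full sequence, in particular it holds along the subsequences $t=k_r$ and $t=k_{r-1}$, so the two Ces\`aro factors tend to $0$. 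Combined with the uniform bound on coefficients, this yields $\frac{1}{h_r}\sum_{m\in I_r}a_m\to 0$, i.e.\ $(x_m)\in AC_\theta$.

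The main obstacle is precisely the coefficient bound in the second step: without $\liminf q_r>1$ the ratio $k_r/h_r$ can blow up and the argument collapses. So the hypothesis is used in exactly one place, to keep these two coefficients under control; the rest is elementary bookkeeping.
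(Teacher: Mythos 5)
Your proof is correct and follows essentially the same route as the paper's: both reduce the lacunary mean over $I_r$ to Ces\`aro means over initial segments and use $\liminf q_r>1$ to get the uniform bound $\frac{k_r}{h_r}\leq\frac{1+\delta}{\delta}$. The only cosmetic difference is that the paper exploits the non-negativity of $\left|x_m-x_{<m,n>}\right|$ to bound $\sum_{m\in I_r}$ directly by $\sum_{m=1}^{k_r}$ and so never needs your subtracted term, whereas you keep the exact two-term identity and additionally bound $\frac{k_{r-1}}{h_r}\leq\frac{1}{\delta}$.
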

\begin{proof}
Let $(x_m)\in AC_{\sigma_1}$ and $\liminf q_r > 1$. Then there exists $\delta>0$ such that $q_r=\frac{k_r}{k_{r-1}} \geq 1+\delta$ for sufficiently large $r.$ We can also choose a sufficiently large $r$ so that $\frac{k_r}{h_r}\leq \frac{1+\delta}{\delta}.$ Then
\begin{align*}
&\frac{1}{k_r}\sum_{m=1}^{k_r}\left|x_m-x_{<m,n>}\right|\\
 &\geq  \frac{1}{k_r}\sum_{m\in I_r}\left|x_m-x_{<m,n>}\right| \\
&= \frac{h_r}{k_r}\left(h_r^{-1}\sum_{m\in I_r}\left|x_m-x_{<m,n>}\right|\right) \\
&\geq  \frac{\delta}{1+\delta} \left(h_r^{-1}\sum_{m\in I_r}\left|x_m-x_{<m,n>}\right|\right)
\end{align*}
which proves that $(x_m)\in AC_{\theta}.$
\end{proof}

\begin{thm}\label{thm2}
For $\limsup q_r< \infty,$ we have $AC_{\theta}\subseteq AC_{\sigma_1}.$
\end{thm}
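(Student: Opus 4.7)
The plan is to run the standard Freedman--Sember--Raphael style argument adapted to the arithmetic-convergence setting. Set $a_m = |x_m - x_{<m,n>}|$ for the integer $n$ witnessing $(x_m) \in AC_\theta$, and write $\tau_r = h_r^{-1}\sum_{m\in I_r} a_m$; by hypothesis $\tau_r \to 0$. Pick $H$ with $q_r \le H$ for all $r$, using $\limsup q_r < \infty$ (and note this forces $k_r/k_{r-1}$ to be bounded, so also $h_r/k_{r-1} \le H-1$).

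Given $t \ge 1$, locate the unique $r$ with $k_{r-1} < t \le k_r$. The key estimate is
\begin{equation*}
\frac{1}{t}\sum_{m=1}^{t} a_m \;\le\; \frac{1}{k_{r-1}}\sum_{m=1}^{k_r} a_m \;=\; \frac{1}{k_{r-1}}\sum_{j=1}^{r} h_j \tau_j,
\end{equation*}
which holds because $t > k_{r-1}$ and $t \le k_r$, and because the sum over $\{1,\ldots,k_r\}$ partitions into the intervals $I_1,\ldots,I_r$. Thus it suffices to show that the Ces\`aro-type average $\frac{1}{k_{r-1}}\sum_{j=1}^{r} h_j \tau_j$ tends to $0$ as $r \to \infty$.

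For this, fix $\varepsilon > 0$ and choose $R$ so that $\tau_j < \varepsilon$ for $j > R$. Let $M = \sup_j \tau_j$ (finite, since $\tau_j \to 0$). Split
\begin{equation*}
\frac{1}{k_{r-1}}\sum_{j=1}^{r} h_j \tau_j \;=\; \frac{1}{k_{r-1}}\sum_{j=1}^{R} h_j \tau_j \;+\; \frac{1}{k_{r-1}}\sum_{j=R+1}^{r} h_j \tau_j.
\end{equation*}
The first piece is bounded by $M\,k_R/k_{r-1}$, which vanishes as $r \to \infty$ because $R$ is fixed. The second piece is at most
\begin{equation*}
\frac{\varepsilon}{k_{r-1}} \sum_{j=R+1}^{r} h_j \;\le\; \varepsilon \cdot \frac{k_r}{k_{r-1}} \;\le\; \varepsilon H.
\end{equation*}
Combining these and letting $r \to \infty$ then $\varepsilon \to 0$ gives $\frac{1}{t}\sum_{m=1}^{t} a_m \to 0$, so $(x_m) \in AC_{\sigma_1}$.

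I do not expect a serious obstacle here: the argument is a classical Ces\`aro/lacunary comparison with $\tau_j$ playing the role of the lacunary averages, and the bound on $q_r$ is used only once, to absorb the tail. The only mildly delicate point is the transition from a general $t$ to the endpoint $k_r$, handled by the inequality $1/t \le 1/k_{r-1}$ together with $t \le k_r$; everything else is bookkeeping.
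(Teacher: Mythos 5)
Your proof is correct and follows essentially the same route as the paper's: bound $\frac{1}{t}\sum_{m=1}^{t}a_m$ by $\frac{1}{k_{r-1}}\sum_{m=1}^{k_r}a_m$, split the sum at a fixed index $R$ beyond which $\tau_j<\varepsilon$, control the head by $k_R/k_{r-1}\to 0$ and the tail by $\varepsilon\,k_r/k_{r-1}\le \varepsilon H$ using $\limsup q_r<\infty$. Your write-up is in fact slightly cleaner than the paper's (which has a typo $\sum_{i=R+1}^{k_r}$ where the upper limit should be $r$), but the argument is the same.
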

\begin{proof}
Let $\limsup q_r< \infty$ then there exists $K>0$ such that $q_r< K$ for every $r.$ Now for $\varepsilon>0$ and $(x_m)\in AC_{\theta}$ there exists $R$ such that for every $r\geq R,$
\begin{equation*}
\tau_r= \frac{1}{h_r}\sum_{m\in I_r}\left|x_m-x_{<m,n>}\right|< \varepsilon.
\end{equation*}
We can also find $T>0$ such that $\tau_r\leq T$ $\forall r.$ Let $t$ be any integer with $k_r\geq t\geq k_{r-1}.$ Then
\begin{align*}
&\frac{1}{t}\sum_{m=1}^{t}\left|x_m-x_{<m,n>}\right|\\
&\leq  \frac{1}{k_{r-1}}\sum_{m=1}^{k_r}\left|x_m-x_{<m,n>}\right|\\
&= \frac{1}{k_{r-1}}\sum_{i=1}^{R}\sum_{m\in I_i}\left|x_m-x_{<m,n>}\right|+ \frac{1}{k_{r-1}}\sum_{i=R+1}^{k_r}\sum_{m\in I_i}\left|x_m-x_{<m,n>}\right|\\
&\leq  \frac{1}{k_{r-1}}\sum_{i=1}^{R}\sum_{m\in I_i}\left|x_m-x_{<m,n>}\right|+ \frac{1}{k_{r-1}}\left(\varepsilon(k_r-k_R)\right)\\
&\leq  \frac{1}{k_{r-1}}\sum_{i=1}^{R}h_i\tau_i+ \frac{1}{k_{r-1}}\left(\varepsilon(k_r-k_R)\right)\\
&\leq  \frac{1}{k_{r-1}}\left(\sup_{i\leq R}\tau_ik_R\right)+ \varepsilon K \\
&< \frac{k_R}{k_{r-1}}T+\varepsilon K
\end{align*}
from which we deduce that $(x_m)\in AC_{\sigma_1} .$
\end{proof}
The following corollary follows from Threoms \ref{thm1} and \ref{thm2}.

\begin{corollary} If
$1< \liminf q_r \leq \limsup q_r < \infty,$
then $AC_{\theta}=AC_{\sigma_1}$.
\end{corollary}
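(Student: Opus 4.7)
The corollary is an immediate consequence of the two preceding theorems, so my proof plan is really just bookkeeping rather than new mathematics. The plan is to observe that the compound hypothesis $1 < \liminf q_r \leq \limsup q_r < \infty$ simultaneously supplies both one-sided conditions needed by Theorems~\ref{thm1} and~\ref{thm2}, and then to intersect the two inclusions to obtain the equality of sets.

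In more detail, the first step is to note that the hypothesis $1 < \liminf q_r$ is exactly the assumption of Theorem~\ref{thm1}, so I may invoke that theorem to conclude $AC_{\sigma_1} \subseteq AC_{\theta}$. The second step is to note that the hypothesis $\limsup q_r < \infty$ is exactly the assumption of Theorem~\ref{thm2}, which therefore yields $AC_{\theta} \subseteq AC_{\sigma_1}$. Combining the two inclusions gives $AC_{\theta} = AC_{\sigma_1}$, which is what is claimed.

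There is no genuine obstacle here: the only things to check are that the strict inequality $1 < \liminf q_r$ really does give the $\delta > 0$ with $q_r \geq 1+\delta$ eventually (needed in Theorem~\ref{thm1}), and that $\limsup q_r < \infty$ really does give a uniform bound $q_r < K$ (needed in Theorem~\ref{thm2}); both are standard properties of $\liminf$ and $\limsup$ and are used inside those theorems already. So the corollary requires no additional estimate beyond citing the two theorems.
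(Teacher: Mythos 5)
Your proposal is correct and matches the paper exactly: the paper simply notes that the corollary follows from Theorems~\ref{thm1} and~\ref{thm2}, which is precisely the two-inclusion argument you give.
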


Next we introduce the lacunary arithmetic convergent sequence space $AC_{\theta}(f)$ defined by modulus function $f$.\\
Let $f$ be a modulus function. We define
\begin{equation*}
AC_{\theta}(f)=\left\{(x_m):\text{there exists an integer} ~n~ \text{such that}~ \frac{1}{h_r}\sum_{m\in I_r}f(\left|x_m-x_{<m,n>}\right|)\rightarrow 0~ \text{as}~ r\rightarrow \infty\right\}.
\end{equation*}
Note that if we put $f(x)=x$, then $AC_{\theta}(f)=AC_{\theta}.$
\begin{thm}
The sequence space $AC_{\theta}(f)$ is a linear space.
\end{thm}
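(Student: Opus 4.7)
The plan is to mimic the argument used for the linearity of $AC_{\theta}$ in Theorem 2.1, but to insert two standard modulus-function tricks to compensate for the fact that $f$ is neither linear nor homogeneous. Given $(x_m),(y_m)\in AC_{\theta}(f)$, I would (following the style of the earlier theorem) take a common integer $n$ witnessing the convergence for both sequences, and pick scalars $\alpha,\beta$ together with positive integers $T_\alpha\geq|\alpha|$ and $M_\beta\geq|\beta|$.

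The main pointwise estimate I would push through is
\begin{align*}
&f\bigl(\bigl|\alpha x_m+\beta y_m-(\alpha x_{<m,n>}+\beta y_{<m,n>})\bigr|\bigr) \\
&\qquad\leq f\bigl(|\alpha|\,|x_m-x_{<m,n>}|+|\beta|\,|y_m-y_{<m,n>}|\bigr) \\
&\qquad\leq f\bigl(|\alpha|\,|x_m-x_{<m,n>}|\bigr)+f\bigl(|\beta|\,|y_m-y_{<m,n>}|\bigr) \\
&\qquad\leq T_\alpha f\bigl(|x_m-x_{<m,n>}|\bigr)+M_\beta f\bigl(|y_m-y_{<m,n>}|\bigr),
\end{align*}
where the first inequality uses the triangle inequality together with $f$ being increasing, the second uses the subadditivity axiom (ii), and the last uses the fact that for any positive integer $N$ one has $f(Nt)=f(t+\cdots+t)\leq Nf(t)$, combined once more with $f$ being increasing to replace $|\alpha|$ by the integer $T_\alpha$ (similarly for $\beta$).

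After that the proof is automatic: summing the above inequality over $m\in I_r$ and dividing by $h_r$ gives
\begin{equation*}
\frac{1}{h_r}\sum_{m\in I_r} f\bigl(|\alpha x_m+\beta y_m-(\alpha x_{<m,n>}+\beta y_{<m,n>})|\bigr)\leq T_\alpha A_r+M_\beta B_r,
\end{equation*}
where $A_r,B_r$ are the lacunary averages that tend to $0$ by hypothesis. Letting $r\to\infty$ shows $\alpha x+\beta y\in AC_{\theta}(f)$, so the space is linear.

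The only genuine obstacle is the scalar-multiplication step, since unlike in Theorem 2.1 we cannot simply pull $|\alpha|$ out of the sum; this is precisely what the integer-majorant trick $|\alpha|\leq T_\alpha$ together with $f(T_\alpha t)\leq T_\alpha f(t)$ is designed to handle. Everything else is a routine transcription of the earlier linearity argument.
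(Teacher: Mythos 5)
Your proof is correct and takes essentially the same route as the paper's: both reduce to the pointwise bound $f(|\alpha x_m+\beta y_m-(\alpha x_{<m,n>}+\beta y_{<m,n>})|)\leq T_\alpha f(|x_m-x_{<m,n>}|)+M_\beta f(|y_m-y_{<m,n>}|)$ with integer majorants $T_\alpha\geq|\alpha|$, $M_\beta\geq|\beta|$, then average over $I_r$. The only difference is that you spell out the justification (monotonicity, subadditivity, and $f(Nt)\leq Nf(t)$ for a positive integer $N$) that the paper asserts without comment.
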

\begin{proof}
Let $(x_m)$ and $(y_m)$ be two sequences in $AC_{\theta}(f).$ Then for an integer $n$ and $\varepsilon>0,$
\begin{equation*}
\frac{1}{h_r}\sum_{m\in I_r}f(\left|x_m-x_{<m,n>}\right|)\rightarrow 0~\text{and}~ \frac{1}{h_r}\sum_{m\in I_r}f(\left|y_m-y_{<m,n>}\right|)\rightarrow 0 ~\text{as}~r\rightarrow \infty.
\end{equation*}
Let $\alpha$ and $\beta$ be two scalars, then there exist integers $T_{\alpha}$ and $M_{\beta}$ such that $\left|\alpha\right|\le T_{\alpha}$ and $\left|\beta\right|\le M_{\beta}.$ Thus
\begin{align*}
&\frac{1}{h_r}\sum_{m\in I_r}f(\left|\alpha x_m+ \beta y_m -(\alpha x_{<m,n>}+ \beta y_{<m,n>})\right|)\\
&\leq T_{\alpha} \frac{1}{h_r}\sum_{m\in I_r}f(\left|x_m-x_{<m,n>}\right|)+ M_{\beta}\frac{1}{h_r}\sum_{m\in I_r}f(\left|y_m-y_{<m,n>}\right|)\\
&\rightarrow  0 ~\text{as}~ r\rightarrow \infty 
\end{align*}  
which implies that $\alpha x_m+ \beta y_m \rightarrow \alpha x_{<m,n>}+ \beta y_{<m,n>}$ in $AC_\theta (f)$.\\
Hence $AC_{\theta}(f)$ is linear.
\end{proof}

\begin{proposition}\label{fisher}\cite{fisher1}
Let $f$ be a modulus and let $0<\delta<1$. Then for each $x\geq \delta$, we have $f(x)\leq 2f(1)\delta^{-1}x$.
\end{proposition}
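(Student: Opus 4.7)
The plan is to use the two structural properties of a modulus that control $f$ from above: monotonicity (axiom (iii)), and subadditivity (axiom (ii)), the latter iterated so that $f(n) \leq n f(1)$ for every positive integer $n$. The hypothesis $x \geq \delta > 0$ is what lets me bracket $x/\delta$ between consecutive \emph{positive} integers, while $\delta < 1$ is what will let me replace $n\delta$ by $n$ when monotonicity is applied.

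Concretely, I would choose $n$ to be the unique positive integer with $n - 1 < x/\delta \leq n$. Since $\delta < 1$, the upper bound $x \leq n\delta$ sharpens to $x \leq n\delta < n$, so monotonicity gives $f(x) \leq f(n)$, and iterated subadditivity on $f(n) = f(1 + 1 + \cdots + 1)$ gives $f(n) \leq n f(1)$. Combining, $f(x) \leq n f(1)$. It then remains to control $n$ in terms of $x/\delta$: the lower bound $n - 1 < x/\delta$ yields $n < x/\delta + 1$, and the hypothesis $x \geq \delta$, i.e.\ $x/\delta \geq 1$, lets me absorb the additive $+1$ into the ratio, giving $n \leq x/\delta + 1 \leq 2 x/\delta$. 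Substituting back produces $f(x) \leq 2 f(1) \delta^{-1} x$, as desired.

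The only real obstacle is to see exactly where each hypothesis is used: $\delta > 0$ is needed just to form the ratio $x/\delta$; $\delta < 1$ is what allows the monotonicity step to land at the integer $n$ rather than merely at $n\delta$; and $x \geq \delta$ is precisely what collapses the additive $+1$ into the multiplicative factor of $2$ appearing in the conclusion. Without the last hypothesis one would only be able to write $f(x) \leq (x/\delta + 1) f(1)$, which is strictly weaker than the stated bound.
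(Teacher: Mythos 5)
Your proof is correct: the choice $n=\lceil x/\delta\rceil$, the chain $f(x)\leq f(n)\leq nf(1)$ via monotonicity and iterated subadditivity, and the bound $n< x/\delta+1\leq 2x/\delta$ (using $x\geq\delta$) together give exactly the stated inequality. The paper itself offers no proof, quoting the result from Pehlivan and Fisher, and your argument is precisely the standard one from that source, so there is nothing to compare beyond noting the match.
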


\begin{thm}
Let $f$ be any modulus such that $\lim_{t\rightarrow \infty}\frac{f(t)}{t}= \beta > 0$ then $AC_{\theta}(f)=AC_{\theta}$. 
\end{thm}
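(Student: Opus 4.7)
My plan is to establish both inclusions $AC_\theta(f) \subseteq AC_\theta$ and $AC_\theta \subseteq AC_\theta(f)$ separately, using two complementary one-sided linear bounds on the modulus $f$: an upper estimate supplied by Proposition \ref{fisher} and a lower estimate extracted from the hypothesis $\beta > 0$.

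For the easier inclusion $AC_\theta \subseteq AC_\theta(f)$, I would fix $(x_m) \in AC_\theta$ with associated integer $n$ and, given $\varepsilon > 0$, choose $0 < \delta < 1$ so small that $f(\delta) < \varepsilon/2$, which is possible since $f$ is continuous at $0$ with $f(0)=0$. Splitting the sum over $m \in I_r$ according to whether $|x_m - x_{<m,n>}| < \delta$ or $\geq \delta$, monotonicity of $f$ handles the first piece while Proposition \ref{fisher} handles the second, giving the bound
\begin{equation*}
\frac{1}{h_r} \sum_{m \in I_r} f(|x_m - x_{<m,n>}|) \leq f(\delta) + 2f(1)\delta^{-1} \cdot \frac{1}{h_r} \sum_{m \in I_r} |x_m - x_{<m,n>}|.
\end{equation*}
The Ces\`aro average on the right tends to $0$ by hypothesis, so letting $r \to \infty$ with $\delta$ fixed delivers $(x_m) \in AC_\theta(f)$. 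This direction in fact works for an arbitrary modulus and does not use $\beta > 0$.

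The substantive inclusion is $AC_\theta(f) \subseteq AC_\theta$, and here I would convert the asymptotic condition $\lim_{t\to\infty} f(t)/t = \beta > 0$ into the \emph{uniform} pointwise lower bound $f(t) \geq \beta t$ for every $t \geq 0$. The key trick is subadditivity: $f(kt) \leq k f(t)$ for every positive integer $k$, hence $f(kt)/(kt) \leq f(t)/t$; sending $k \to \infty$ on the left (so that $kt \to \infty$) yields $\beta \leq f(t)/t$ for every $t>0$, i.e.\ $f(t) \geq \beta t$. Substituting this termwise in the Ces\`aro sum gives
\begin{equation*}
\frac{1}{h_r} \sum_{m \in I_r} f(|x_m - x_{<m,n>}|) \geq \beta \cdot \frac{1}{h_r} \sum_{m \in I_r} |x_m - x_{<m,n>}|,
\end{equation*}
so $(x_m) \in AC_\theta(f)$ at once forces $(x_m) \in AC_\theta$.

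The main obstacle is precisely the extraction of the uniform linear lower bound $f(t) \geq \beta t$: the hypothesis supplies only asymptotic information, and without invoking subadditivity one could at best conclude $f(t) \geq \beta t$ for \emph{large} $t$, which is of no use here because the differences $|x_m - x_{<m,n>}|$ are not bounded away from $0$. Once this pointwise estimate is secured, both inclusions reduce to direct comparisons inside the lacunary Ces\`aro mean and no further structural facts about $\theta$ are needed.
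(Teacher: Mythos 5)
Your proposal is correct and follows essentially the same route as the paper: the inclusion $AC_\theta \subseteq AC_\theta(f)$ via the $\delta$-splitting of the Ces\`aro sum together with Proposition \ref{fisher}, and the reverse inclusion via the pointwise lower bound $f(t)\geq \beta t$. The one difference is to your credit: the paper merely asserts $f(t)\geq\beta t$ for all $t\geq 0$ as a consequence of $\beta>0$, whereas you actually derive it from subadditivity ($f(kt)\leq k f(t)$, so $f(kt)/(kt)\leq f(t)/t$, and letting $k\to\infty$ gives $\beta\leq f(t)/t$), which is precisely the justification the paper omits.
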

\begin{proof}
Let $(x_m)\in AC_{\theta}$, then for an integer $n$,\\
\begin{equation*}
\tau_r= \frac{1}{h_r}\sum_{m\in I_r}\left|x_m-x_{<m,n>}\right|\rightarrow 0 ~\text{as}~r\rightarrow \infty.
\end{equation*}
Let $\varepsilon>0$ be given. We choose $0<\delta<1$ such that $f(u)<\varepsilon$ for every $u$ with $0\leq u\leq \delta$. We can write
\begin{align*}
&\frac{1}{h_r}\sum_{m\in I_r}f(\left|x_m-x_{<m,n>}\right|)\\
&= \frac{1}{h_r}\sum_{\underset {\left|x_m-x_{<m,n>}\right| \leq \delta}{m\in I_r;}}f(\left|x_m-x_{<m,n>}\right|)+ \frac{1}{h_r}\sum_{\underset {\left|x_m-x_{<m,n>}\right| > \delta}{m\in I_r;}}f(\left|x_m-x_{<m,n>}\right|) \\
&\leq \frac{1}{h_r}(h_r \varepsilon) + 2f(1)\delta^{-1}\tau_r, ~\text{using (Proposition \ref{fisher})}\\
&\rightarrow  0 ~ \text{as}~ r\rightarrow \infty.
\end{align*}
Therefore $(x_m)\in AC_{\theta}(f)$.\\
Till this part of the proof we do need $\beta >0$. Now let $\beta>0$ and let $(x_m)\in AC_{\theta}(f)$. Since $\beta>0$, we have $f(t)\geq \beta t$ $\forall t \geq 0$. Hence it follows that $(x_m)\in AC_{\theta}$.  
\end{proof}

\small

\end{document}